\let\Thebibliography=\thebibliography
\renewcommand{\thebibliography}[1]{\def\@mkboth##1##2{}\Thebibliography{#1}
\addcontentsline{toc}{section}{References}
\frenchspacing % Maybe not needed
% Deleting extra vertical space
\setlength{\@topsep}{0pt}% Delete if extra space before list
\setlength{\itemsep}{0pt}%
\setlength{\parskip}{0pt plus 2pt}%
\makeatother
%
%      Here we change the vertical spacing and labels of the enumerate environment
%
\makeatletter
\let\Enumerate=\enumerate
\renewcommand{\enumerate}{\Enumerate}%
% Deleting extra vertical space
\setlength{\@topsep}{0pt}% Delete if extra space before list
\setlength{\itemsep}{0pt}%
\setlength{\parskip}{0pt plus 1pt}%
\renewcommand{\theenumi}{\textup{(\alph{enumi})}}%
\renewcommand{\labelenumi}{\theenumi}%
}
\let\endEnumerate=\endenumerate
\renewcommand{\endenumerate}{\endEnumerate\unskip}
\def\@seccntformat#1{\csname the#1\endcsname.\quad}
\newcommand{\art}[6]{{\sc #1, \rm #2, \it #3 \bf #4 \rm (#5), \mbox{#6}.}}
\newcommand{\book}[3]{{\sc #1, \it #2, \rm #3.}}
\newcommand{\AND}{{\rm and }}
\newtheoremstyle{descriptive}%
  {\topsep}   %{\medskipamount}          % Space above
  {\topsep}   %  {\medskipamount}          % Space below
  {\rmfamily} % Body font
  {}          % Indent
  {\bfseries} % Head font
  {.}         % Punctuation after thm head
  { }         % Space after thm head
  {}          % Thm head spec(?)
\newtheoremstyle{propositional}%
  {\topsep}   %  {\medskipamount}          % Space above
  {\topsep}   %  {\medskipamount}          % Space below
\theoremstyle{propositional}
\newtheorem{thm}{Theorem}[section]
\newtheorem{theorem}[thm]{Theorem}  % As used by Niko
\newtheorem{prop}[thm]{Proposition}
\newtheorem{lemma}[thm]{Lemma} % As used by Niko
\theoremstyle{descriptive}
\newtheorem{remark}[thm]{Remark}
\renewenvironment{proof}[1][\proofname]{\par
  \pushQED{\qed}%
  \normalfont
%\topsep6\p@\@plus6\p@\relax % Removed by Anders Bj\"orn
  \trivlist
  \item[\hskip\labelsep
        \itshape
    #1\@addpunct{.}]\ignorespaces
}{%
  \popQED\endtrivlist\@endpefalse
}
\newdimen\extrawidth
\def\iintlim#1#2{\setbox0\hbox{$\scriptstyle#1$}%
        \setbox1\hbox{$\scriptstyle#2$}%
        \extrawidth=\wd1 \advance\extrawidth-\wd0
        \ifdim\extrawidth<0pt \extrawidth=0pt\fi%
        \int_{#1\kern\extrawidth \kern .5em}^{#2\kern -\wd1} \kern -.5em%
}
\def\vint_#1{\mathchoice%
          {\mathop{\kern 0.2em\vrule width 0.6em height 0.69678ex depth -0.58065ex
                  \kern -0.8em \intop}\nolimits_{\kern -0.4em#1}}%
          {\mathop{\kern 0.1em\vrule width 0.5em height 0.69678ex depth -0.60387ex
                  \kern -0.6em \intop}\nolimits_{#1}}%
          {\mathop{\kern 0.1em\vrule width 0.5em height 0.69678ex depth -0.60387ex
                  \kern -0.6em \intop}\nolimits_{#1}}%
          {\mathop{\kern 0.1em\vrule width 0.5em height 0.69678ex depth -0.60387ex
                  \kern -0.6em \intop}\nolimits_{#1}}}
\def\vintslides_#1{\mathchoice%
          {\mathop{\kern 0.1em\vrule width 0.5em height 0.697ex depth -0.581ex
                  \kern -0.6em \intop}\nolimits_{\kern -0.4em#1}}%
          {\mathop{\kern 0.1em\vrule width 0.3em height 0.697ex depth -0.604ex
                  \kern -0.4em \intop}\nolimits_{#1}}%
          {\mathop{\kern 0.1em\vrule width 0.3em height 0.697ex depth -0.604ex
                  \kern -0.4em \intop}\nolimits_{#1}}%
          {\mathop{\kern 0.1em\vrule width 0.3em height 0.697ex depth -0.604ex
                  \kern -0.4em \intop}\nolimits_{#1}}}
\newcommand{\Cp}{{C_p}}
\DeclareMathOperator{\diam}{\rm diam}
\DeclareMathOperator{\Lip}{Lip}
\DeclareMathOperator{\pr}{pr}
\DeclareMathOperator{\capc}{cap}
\DeclareMathOperator{\Capc}{Cap}
\newcommand{\loc}{_{\rm loc}}
\def\rightangle{\vcenter{\hsize5.5pt
    \hbox to5.5pt{\vrule height7pt\hfill}
    \hrule}}
\def\rtangle{\mathrel{\rightangle}}
\def\intave#1{\int_{#1}\hbox{\llap{$\raise2.3pt\hbox{\vrule
height.9pt width7pt}\phantom{\scriptstyle{#1}}\mkern-2mu$}}}
\newcommand{\dmu}{d\mu}
\newcommand{\eps}{\varepsilon}
\newcommand{\Om}{\Omega}
\renewcommand{\phi}{\varphi}
\newcommand{\p}{{$p\mspace{1mu}$}}
\newcommand{\R}{\mathbb{R}}
\newcommand{\limminus}{{\mathchoice{\raise.17ex\hbox{$\scriptstyle -$}}
                {\raise.17ex\hbox{$\scriptstyle -$}}
                {\raise.1ex\hbox{$\scriptscriptstyle -$}}
                {\scriptscriptstyle -}}}
\newcommand{\limplus}{{\mathchoice{\raise.17ex\hbox{$\scriptstyle +$}}
                {\raise.17ex\hbox{$\scriptstyle +$}}
                {\raise.1ex\hbox{$\scriptscriptstyle +$}}
                {\scriptscriptstyle +}}}
\newcommand{\limpm}{{\mathchoice{\raise.17ex\hbox{$\scriptstyle \pm$}}
                {\raise.17ex\hbox{$\scriptstyle \pm$}}
                {\raise.16ex\hbox{$\scriptscriptstyle \pm$}}
                {\scriptscriptstyle \pm}}}
\newcommand{\Np}{N^{1,p}}
\newcommand{\setcurrentlabel}[1]{\def\@currentlabel{#1}}
\numberwithin{equation}{section}
\newcommand{\avint}{\vint}
 \newcommand{\GG}{\mathcal{G}}
\renewcommand{\H}{\mathcal{H}}
 \newcommand{\V}{{\mathcal V}}
\def\medcup{\mathop{\textstyle\bigcup}\limits}
\begin{document}

\title[Aspects of area formulas on metric measure spaces]{Aspects of
  area formulas by way of Luzin, Rad\'o, and Reichelderfer on metric
  measure spaces}

\author{Niko Marola} \address[Niko Marola]{Department of Mathematics
  and Statistics, University of Helsinki, P.O. Box 86, FI-00014
  University of Helsinki, Finland} \email{niko.marola@helsinki.fi}

\author{William P. Ziemer}
\address[William P. Ziemer]{Mathematics Department,
Indiana University, Bloomington, Indiana 47405, USA}
\email{ziemer@indiana.edu}

\date{}

\begin{abstract}
  We consider some measure-theoretic properties of functions belonging
  to a Sobolev-type class on metric measure spaces that admit a
  Poincar\'e inequality and are equipped with a doubling measure. The
  properties we have selected to study are those that are related to
  area formulas.
\end{abstract}

\maketitle

{\small \emph{Mathematics Subject Classification (2010)}:
Primary: 46E35, 46E40; Secondary: 30L99, 28A99.
}

{\small \emph{Key words and phrases}:
Area formula, condition N, doubling measure, Luzin's condition,
  metric space, Newtonian space, Poincar\'e inequality, Sobolev
  space, upper gradient.
}

\section{Introduction}

We investigate some measure-theoretic properties of functions
belonging to the Banach or vector space-valued Newtonian space
$N^{1,p}(X)$ and compare these properties in the more general setting
with the classical Euclidean ones. Newtonian space is a metric space
analogue of the classical Sobolev space $W^{1,p}(\R^n)$ and was first
introduced and studied by Shanmugalingam in \cite{Sh-rev}; here $X$
refers to a complete metric measure space with a measure $\mu$ that
satisfies a volume doubling condition and the space is assumed to
support a Poincar\'e inequality. Under these rather standard
conditions on the space, we give a metric space version of Luzin's
condition for the graph mapping similar to one in Mal\'y et
al.~\cite{MaSwaZi}, we study absolute continuity as defined by
Mal\'y~\cite{Maly} for functions in the Newtonian class, and we also
discuss the condition due to Rad\'o and Reichelderfer~\cite{RaRe}.

We provide a version of the area formula for Newtonian functions. In
particular, we extend the Euclidean results of Haj\l
asz~\cite{HajlaszProc} and Mal\'y et al.~\cite{MaSwaZi} to
Newton--Sobolev functions in the aforementioned setting of general
metric spaces. We provide another view to a recent result by
Magnani~\cite{MagnaniX} which is related to the area formula in
general metric measure spaces.

Under rather general assumptions on $X$ (see Section~\ref{sect:prel})
the following area formula will be shown to be valid for the graph
mapping $\bar u$ of $u \in N\loc^{1,p}(X;\R^m)$, where $p > m$ or $p
\geq m=1$,
\[
 \H^Q(\bar u(A)) = \int_A \mathcal{J}\bar u\, d\mu,
\]  
whenever $A$ is a $\mu$-measurable subset and $\mathcal{J}\bar u$
denotes the generalized Jacobian of $\bar u$. In particular,
$\H^Q(\bar u(A))=0$ whenever $\mu(A)=0$. Here the exponent $Q$ serves as
a substitute for the dimension of $X$, and it is associated with the
doubling constant of the underlying measure $\mu$ (see
Section~\ref{sect:prel}).

Althought the proofs for these formulas are rather standard, our
general setting causes some unexpected difficulties. To overcome
these, we carefully consider some local properties of so-called
generalized Jacobian of a function and couple them with the
aforementioned measure-theoretic properties of Newton--Sobolev
functions. 

There is a rich supply of examples of complete metric spaces with a
volume doubling measure that support a Poincar\'e inequality and where
our results are applicable. To name but a few, we list
Carnot--Carath\'eodory spaces, thus including the Heisenberg group and
more general Carnot groups, as well as Riemannian manifolds with
non-negative Ricci curvature.

In outline, the paper is organized as follows: In
Section~\ref{sect:prel} we introduce the necessary background material
such as the doubling condition for the measure, upper gradients,
Poincar\'e inequality, Newtonian spaces, and capacity. In
Section~\ref{sect:graph} we establish a general criterion for a
version of Luzin's condition in the spirit of Rad\'o and
Reichelderfer~\cite[V.3.6]{RaRe}, see also Mal\'y et
al.~\cite{MaSwaZi}. Then we close Section~\ref{sect:graph} by proving,
with the aid of estimates between the capacity and the Hausdorff
content, that the graph mapping of a vector-valued Newtonian function
satisfies a version of the Luzin condition. In Section~\ref{sect:area}
we deal with the area formula. In Section~\ref{sect:RR} we study the
Rad\'o--Reichelderfer condition and absolute continuity of Newtonian
functions in the spirit of Mal\'y~\cite{Maly}.

\subsubsection*{Acknowledgements} We would like to thank Nageswari
Shanmugalingam for detailed comments and suggestions on several draft
versions of the paper.

\section{Metric measure spaces: doubling and Poincar\'e}
\label{sect:prel}

We briefly recall the basic definitions and collect some well-known
results needed later. For a thorough treatment we refer the reader to
a monograph by A. and J. Bj\"orn~\cite{BBbook} and
Heinonen~\cite{heinonen}.

Throughout the paper, if not otherwise stated, $X:=(X,d,\mu)$ is a
complete metric space endowed with a metric $d$ and a positive
complete Borel regular measure $\mu$ such that $0< \mu(B(x,r))
<\infty$ for all balls $B(x,r):=\{y\in X:\ d(x,y)<r\}$; and if
$B=B(x,r)$, then we denote $\tau B = B(x,\tau r)$ for each
$\tau>0$. We also denote the metric ball $B(x,r)$ by $B_X(x,r)$ if
necessary. Also throughout the paper, if not otherwise stated, let
$Y:=(Y,\tilde d,\nu)$ be a complete separable metric measure space with a
positive complete Borel regular measure $\nu$. A function
$f:X\to Y$ is called $L$-Lipschitz if for all $x,y\in X$, $\tilde
d(f(x),f(y))\leq Ld(x,y)$. We let $\Lip(f)$ be the infimum of such
$L$. 

In our treatment, it is natural to assume some connection between the
measure and the metric. Also by dimension we mean some quantity which
relates the measure of a metric ball to its radius. We shall clarify
these concepts below. Our standing assumptions on the metric space $X$
are as follows.

\begin{enumerate}

\item[(D)] The measure $\mu$ is doubling, i.e., there exists a
  constant $C_\mu \geq 1$, called the \emph{doubling constant} of
  $\mu$, such that
\begin{equation*}
        \mu(B(x,2r)) \le C_\mu \mu(B(x,r)).
\end{equation*}
for all $x\in X$ and $r>0$.

\item[(PI)] The space $X$ supports a weak $(1,p)$-Poincar\'e
  inequality for some $p\geq 1$ (see below).

\end{enumerate}

\medskip

We note the doubling condition (D) implies that for every $x\in X$
and $r>0$, we have for $\lambda \geq 1$
\begin{equation} \label{doublingcor}
\mu(B(x,\lambda r)) \leq C\lambda^Q\mu(B(x,r)),
\end{equation}
where $Q =\log_2C_\mu$, and the constant depends only on $C_\mu$. The
exponent $Q$ serves as a dimension of the doubling measure $\mu$; we
emphasize that it need not be an integer. \emph{When it is necessary
  to emphasize the relationship between $Q$ and $X$, we will use the
  notation $X^Q$}. Complete metric spaces verifying condition (D) are
precisely those that have finite Assouad dimension
\cite{heinonen}. This notion of dimension, however, need not to be
uniform in space. In what follows, we assume further that
there exists a constant $C >0$, depending only on $C_\mu$, such that
the measure $\mu$ satisfies the lower mass bound
\begin{equation} \label{measuregrowth}
Cr^Q \leq \mu(B(x,r))
\end{equation}
for all $x\in X$ and $0<r<\diam(X)$. It follows from (D) that $\mu$
satisfies the following local version of \eqref{measuregrowth}: For a
fixed $x_0\in X$ and a scale $r_D>0$ we have
\begin{equation} \label{localmassest}
\tilde{C}r^Q\leq \mu(B(x,r))
\end{equation}
for all balls $B(x,r)\subset X$ with $x\in B(x_0,r_D)$ and $0<r<r_D$,
where $\tilde{C} = Cr_D^{-Q}\mu(B(x_0,r_D))$ and $C$ is from
\eqref{doublingcor}.

Let $s\geq 0$. We define the (spherical) Hausdorff $s$-measure in $X$
as in Federer~\cite[2.10.2]{Federer} (see also \cite{heinonen}) and
will denote it by $\H^s$. We also denote by $\H^s_\infty$ the
Hausdorff $s$-content in $X$ defined as
\[
\H^s_\infty(E)=\inf\bigg\{\sum_{i=1}^\infty r_i^s:\
E\subset\bigcup_{i=1}^\infty B(x_i,r_i),\, x_i\in E\bigg\},
\]
where the infimum is taken over all countable covers of $E$ by balls
$B(x_i,r_i)$. We note here that if $X$ is a proper, i.e. boundedly
compact, metric space, then Hausdorff content is inner regular in the
following sense
\[
\H^s_\infty(E)= \sup\{\H^s_\infty(K):K \subset E,\, K \textrm{ compact}\}
\]
whenever $E \subset X$ is a Borel set. See Federer~\cite[Corollary
2.10.23]{Federer}. We shall also need the concept of the
\emph{Hausdorff measure of codimension $s$} of $E\subset X$ which we
define by applying the Carath\'eodory construction to the function
\[
h(B(x,r)) = \frac{\mu(B(x,r))}{r^s}.
\]
Above, we use the convention $h(B(x,0)) := h(\emptyset) = 0$. We thus
define the restricted Hausdorff content of codimension $s$ as follows
\[
\widetilde\H^s_R(E) = \inf\biggl\{\sum_{i=1}^\infty h(B(x_i,r_i)):
E\subset \bigcup_{i=1}^\infty B(x_i,r_i),\, x_i\in E,\, r_i \leq
R\biggr\},
\]
where $0<R<\infty$. When $R=\infty$, we have the corresponding
Hausdorff content of $E$ and denote it by
$\widetilde\H^s_\infty(E)$. Finally, the Hausdorff measure of
codimension $s$ is defined as
\[
\widetilde\H^s(E) = \lim_{R\to 0}\widetilde\H^s_R(E).
\]
We remark that if the measure $\mu$ is $Q$-regular, i.e., $\mu(B(x,r))
\approx r^Q$, for some $Q\geq 1$, $\widetilde\H^s(E) \approx
\H^{Q-s}(E)$. Let us mention that the lower mass bound
\eqref{measuregrowth} for the measure $\mu$ implies that $\H^Q$ is
absolutely continuous with respect to $\mu$ and that $\H^{Q-s}(E)\leq C
\widetilde\H^s(E)$.

The \emph{upper $s$-density} of a finite Borel regular measure $\zeta$
at $x$ is defined by
\[
\Theta^\ast_s (\nu,x) = \limsup_{r\to
  0\limplus}\frac{\zeta(B(x,r))}{\omega_sr^s },
\]
where $\omega_s$ is the Lebesgue measure of the unit ball in $\R^s$
when $s$ is a positive integer, and $\omega_s=
\Gamma(1/2)^s/\Gamma(s/2+1)$ otherwise. We record that if for all $x$ in a
Borel set $E\subset X$, $\Theta^\ast_s(\zeta,x)\geq \alpha$,
$0<\alpha<\infty$, then
\[
\zeta \geq \alpha C\H^s\rtangle E,
\]
where the positive constant $C$ depends only on $s$. On the other
hand, if $\Theta^\ast_s(\zeta,x)\leq\alpha$ we obtain
\[
\zeta\rtangle E\leq \alpha C\H^s\rtangle E,
\]
where a positive constant $C$ depends only on $s$. See
Federer~\cite[2.10.19]{Federer}. 

Recall that the following general covering theorem is valid in our
setting. From a given family of balls $\mathcal{B}$ with $\sup\{\diam
B:\ B\in\mathcal{B}\}<\infty$ covering a set $E\subset X$ we can
select a pairwise disjoint subfamily $\mathcal{B}'$ of balls such that
\[
E \subset \medcup_{B\in\mathcal{B}'} 5B,
\]
see \cite[Corollary 2.8.5]{Federer}. If $X$ is separable, then
$\mathcal{B}'$ is countable and $\mathcal{B}'= \{B_i\}_{i\geq 1}$.

In this note, a \emph{curve} $\gamma$ in $X$ is a continuous mapping
from a compact interval $[0,L]$ to $X$. We recall that each curve can
be parametrized by 1-Lipschitz map $\tilde\gamma:[0,L]\to X$. A
nonnegative Borel function $g$ on $X$ is an \emph{upper gradient} of
a function $f:X\to Y$ if for all rectifiable
curves $\gamma$, we have
\begin{equation} \label{ug-cond}
\tilde d(f(\gamma(L)),f(\gamma(0)))\leq \int_\gamma g\,ds.
\end{equation}
See Cheeger~\cite{Cheeger} and Shanmugalingam~\cite{Sh-rev} for a
discussion on upper gradients. If $g$ is a nonnegative measurable
function on $X$ and if (\ref{ug-cond}) holds for \p-almost every
curve, \(p\geq1\), then $g$ is a \emph{weak upper gradient} of $f$. By
saying that (\ref{ug-cond}) holds for \p-almost every curve we mean
that it fails only for a curve family with zero \p-modulus (see, e.g.,
\cite{Sh-rev}). If $u$ has an upper gradient in $L^p(X)$, then it has
a \emph{minimal weak upper gradient} $g_f \in L^p(X)$ in the sense
that for every weak upper gradient $g \in L^p(X)$ of $f$, $g_f \le g$
$\mu$-almost everywhere (a.e.), see Corollary~3.7 in
Shanmugalingam~\cite{Sh-harm}. While the results in \cite{Sh-rev} and
\cite{Sh-harm} are formulated for real-valued functions and their
upper gradients, they are applicable for metric space valued functions
and their upper gradients; the proofs of these results require only
the manipulation of upper gradients, which are always real-valued.

We define Sobolev spaces on metric spaces following
Shanmugalingam~\cite{Sh-rev}. Let $\Om\subseteq X$ be nonempty and
open. Whenever $u\in L^p(\Om)$ and \(p\ge1\), let
\begin{equation} \label{lpnorm}
        \|u\|_{\Np(\Om)} := \|u\|_{1,p} := \biggl( \int_\Om |u|^p \, \dmu
                + \int_\Om g_u^p \, \dmu \biggr)^{1/p}.
\end{equation}
The \emph{Newtonian space} on $\Om$ is the quotient space
$$
        \Np (\Om) = \{u: \|u\|_{\Np(\Om)} <\infty \}/{\sim},
$$
where $u \sim v$ if and only if $\|u-v\|_{\Np(\Om)}=0$. The space
$\Np(\Om)$ is a Banach space and a lattice. If $\Om \subset \R^n$ is
open, then $\Np(\Om) = W^{1,p}(\Om)$ as Banach spaces.  For these and
other properties of Newtonian spaces we refer to \cite{Sh-rev}. The
class $\Np(\Om;\R^m)$ consists of those mappings $u:\Om\to\R^m$ whose
component functions each belong to $\Np(\Om)=\Np(\Om;\R)$.
Qualitative properties like Lebesgue points, density of Lipschitz
functions, quasicontinuity, etc.  may be investigated componentwise.

A function belongs to the \emph{local Newtonian space}
$N\loc^{1,p}(\Om)$ if $u\in N^{1,p}(V)$ for all bounded open sets $V$
with $\bar V\subset\Om$, the latter space being defined by considering
$V$ as a metric space with the metric $d$ and the measure $\mu$
restricted to it.

Newtonian spaces share many properties of the classical Sobolev
spaces. For example, if $u,v \in N^{1,p}\loc(\Om)$, then
$g_u=g_v$ $\mu$-a.e. in $\{x \in \Omega : u(x)=v(x)\}$, furthermore,
$g_{\min\{u,c\}}=g_u \chi_{\{u \neq c\}}$ for $c \in \R$.

We shall also need a \emph{Newtonian space with zero boundary values}. For a
measurable set $E\subset \Om$, let
\[
N_0^{1,p}(E) = \{f|_E: f\in N^{1,p}(\Om) \textrm{ and } f= 0 \textrm{ on
} \Om\setminus E\}.
\]
This space equipped with the norm inherited from $N^{1,p}(\Om)$ is a
Banach space.

We say that $X$ supports a \emph{weak $(1,p)$-Poincar\'e inequality}
if there exist constants $C>0$ and $\tau \ge 1$ such that for all
balls $B(z,r) \subset X$, all measurable functions $f$ on $X$ and for
all weak upper gradients $g_f$ of $f$,

\begin{equation} \label{PI-ineq}
        \vint_{B(z,r)} |f-f_{B(z,r)}| \,\dmu
        \le Cr \Big( \vint_{B(z,\tau r)} g_f^{p} \,\dmu \Big)^{1/p},
\end{equation}
where $f_{B(z,r)} :=\vint_{B(z,r)} f \, d\mu :=\int_{B(z,r)} f
\,\dmu/\mu(B(z,r))$.

It is well known that the embedding $\Np(X) \rightarrow L^{p}(X)$ is
not surjective if and only if there exists a curve family in $X$ with
a positive \p-modulus. Moreover, the validity of a Poincar\'e
inequality can sometimes be stated in terms of \p-modulus. More
precisely, to require that \eqref{PI-ineq} holds in $X$ is to require
that the \p-modulus of curves between every pair of distinct points of
the space is sufficiently large, see Theorem 2 in Keith~\cite{Keith}.

It is noteworthy that by a result of Keith and Zhong~\cite{KeZho} in a
complete metric space equipped with a doubling measure and supporting
a weak $(1,p)$-Poincar\'e inequality there exists $\eps_0>0$ such that
the space admits a weak $(1,p')$-Poincar\'e inequality for each
$p'>p-\eps_0$.

The following Luzin-type approximation theorem shall be of use later
in the paper. We refer to Shanmugalingam~\cite[Theorem 4.1]{Sh-rev}
for the proof which, in turn, is a modification of an idea due to
S. Semmes. See also Haj\l asz~\cite[Theorem 5]{Hajlasz}.

\medskip

\begin{thm} \label{thm:Luzin} Suppose $X$ satisfies (D) and (PI) for
  some $1<p<\infty$. Let $u\in N^{1,p}(X)$. Then for every
  $\eps>0$ there is a Lipschitz function $f_\eps: X\to
  \R$ such that
\[
\mu(\{x\in X : u(x)\neq f_\eps(x)\})< \eps
\]
and $\|u-f_\eps\|_{1,p}<\eps$.  In other words, with $F_{\eps} :=
\{x\in X : u(x)\neq f_\eps(x)\}$, we have $u|_{X \setminus F_{\eps}}$
is Lipschitz.
\end{thm}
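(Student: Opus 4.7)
The plan is to reduce to the scalar case and apply the maximal function technique due to Haj\l asz and Shanmugalingam. Writing $u=(u^1,\dots,u^m)$ and observing that each coordinate projection is $1$-Lipschitz, each component $u^j$ belongs to $N^{1,p}(X)$ with $g_{u^j}\leq g_u$. It therefore suffices to produce a Lipschitz approximant for each scalar component with error $\eps/m$ and assemble the results coordinatewise, so assume $m=1$ and write $g=g_u$. By the Keith--Zhong theorem quoted in the excerpt, fix $q\in(1,p)$ for which $X$ admits a weak $(1,q)$-Poincar\'e inequality. Let $\M$ denote the non-centered Hardy--Littlewood maximal operator on $(X,\mu)$; by (D) it is bounded on $L^s(X)$ for every $s>1$. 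For $\lambda>0$, put
\[
E_\lambda:=\{x\in X:\M(g^q)(x)>\lambda^q\}\cup\{x\in X:\M(|u|^q)(x)>\lambda^q\},
\]
and $F_\lambda:=X\setminus E_\lambda$.

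A standard telescoping argument over the chain of balls $B(x,2^{-k}d(x,y))$ combined with the weak $(1,q)$-Poincar\'e inequality shows that any two Lebesgue points $x,y\in F_\lambda$ of $u$ satisfy
\[
|u(x)-u(y)|\leq C d(x,y)\bigl(\M(g^q)(x)^{1/q}+\M(g^q)(y)^{1/q}\bigr)\leq C\lambda d(x,y),
\]
and $|u(x)|\leq\M(|u|^q)(x)^{1/q}\leq\lambda$. Hence $u|_{F_\lambda}$, restricted to its Lebesgue points, is $C\lambda$-Lipschitz and bounded by $\lambda$, so McShane's extension theorem yields $\varphi_\lambda:X\to\R$ with $\Lip(\varphi_\lambda)\leq C\lambda$, $\|\varphi_\lambda\|_\infty\leq\lambda$, and $\varphi_\lambda=u$ on $F_\lambda$. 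In particular, $\{u\neq\varphi_\lambda\}\subseteq E_\lambda$ modulo a $\mu$-null set.

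Chebyshev's inequality and the strong $(p/q,p/q)$ bound of $\M$ applied to $g^q,|u|^q\in L^{p/q}(X)$ give
\[
\mu(E_\lambda)\leq C\lambda^{-p}\bigl(\|g\|_{L^p}^p+\|u\|_{L^p}^p\bigr),
\]
so $\mu\{u\neq\varphi_\lambda\}<\eps$ once $\lambda$ is large enough. For the $N^{1,p}$-norm, since $u=\varphi_\lambda$ on $F_\lambda$ we have $g_{u-\varphi_\lambda}\leq(g+\Lip(\varphi_\lambda))\chi_{E_\lambda}$ $\mu$-a.e., and
\[
\|u-\varphi_\lambda\|_{N^{1,p}}^p\leq C\int_{E_\lambda}\bigl(|u|^p+g^p\bigr)\,d\mu+C\lambda^p\mu(E_\lambda).
\]
The first integral vanishes as $\lambda\to\infty$ by absolute continuity of the Lebesgue integral, because $\mu(E_\lambda)\to 0$. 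The decay of the second term is the main technical point: the bare weak $(1,1)$ bound gives only $\lambda^p\mu(E_\lambda)\lesssim\|u\|_{N^{1,p}}^p$, which is bounded but does not vanish. The Keith--Zhong improvement circumvents this because $\M(g^q)^{p/q},\M(|u|^q)^{p/q}\in L^1(X)$, so
\[
\lambda^p\mu(E_\lambda)\leq\int_{E_\lambda}\bigl(\M(g^q)^{p/q}+\M(|u|^q)^{p/q}\bigr)\,d\mu\longrightarrow 0
\]
as $\lambda\to\infty$. Choosing $\lambda$ large enough to make both bounds smaller than $\eps$ concludes the proof.
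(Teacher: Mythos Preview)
Your proof is correct. Note, however, that the paper does not supply its own proof of this theorem: it is quoted from the literature, with a reference to Shanmugalingam~\cite[Theorem~4.1]{Sh-rev} and Haj\l asz~\cite[Theorem~5]{Hajlasz}. Your argument follows exactly the maximal-function/telescoping strategy underlying those references: pass to a level set $F_\lambda$ on which the maximal function of (a power of) the gradient is bounded, use the chain-of-balls estimate with the Poincar\'e inequality to see that $u|_{F_\lambda}$ is $C\lambda$-Lipschitz, McShane-extend, and then control both $\mu(E_\lambda)$ and the Newtonian norm of the difference. The one point worth flagging is your use of the Keith--Zhong self-improvement to get an exponent $q<p$: this is what makes $\M(g^q)^{p/q}$ and $\M(|u|^q)^{p/q}$ genuinely integrable and forces $\lambda^p\mu(E_\lambda)\to 0$, which is precisely the step where a naive weak-$(1,1)$ bound would stall. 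This refinement postdates the cited sources (which relied instead on the Haj\l asz pointwise inequality or an a priori $(1,q)$-Poincar\'e assumption), so your write-up is in fact a cleaner, self-contained route to the full $N^{1,p}$-norm convergence than what one can read off directly from \cite{Sh-rev} and \cite{Hajlasz}.
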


\subsection*{Capacity} There are several equivalent definitions for
capacities, and the following are the ones we find most suitable for
our purposes. Let $1\leq p<\infty$ and $\Om\subset X$ bounded.
\begin{itemize}

\item The variational \p-capacity of a set $E\subset X$ is the number
\[
\capc_p(E) = \inf\|g_u\|_{L^p(X)}^p,
\]
where the infimum is taken over all $u\in\Np(X)$ such that $u\geq 1$
on $E$; recall that $g_u$ is the minimal \p-weak upper gradient of
$u$.

\item The relative \p-capacity of $E\subset \Om$ is
the number
\[
\Capc_p(E,\Om) = \inf\|g_u\|_{L^p(\Om)}^p,
\]
where the infimum is taken over all $u\in N_0^{1,p}(\Om)$ such that $u\geq 1$
on $E$.

\item The Sobolev \p-capacity of $E\subset X$ is the number
\[
\Cp(E) = \inf\|u\|_{N^{1,p}(X)}^p,
\]
where the infimum is taken over all $u\in N^{1,p}(X)$ such that $u\geq 1$
on $E$.
\end{itemize}

Observe that if $\mu(X)<\infty$ the constant function will do as a
test function, thus all sets are of zero variational
\p-capacity. However, this is not true for the relative \p-capacity
whenever $X\setminus \Om$ is ``large'', say, $\Cp(X\setminus\Om)>0$.

Under our assumptions, these capacities enjoy the standard properties
of capacities. For instance, when $p>1$ they are Choquet
capacities, i.e., the capacity of a Borel set can be obtained by
approximating with compact sets from inside and open sets from
outside. It is noteworthy, however, that the Choquet property fails
for $p=1$ in the general metric setting. This does not cause any
problems for us as we mainly deal with compact sets in this note. In a
recent paper by Kinnunen--Hakkarainen~\cite{HaKi} the BV-capacity was
proved to be a Choquet capacity. See, e.g.,
Kinnunen--Martio~\cite{KiMa96}, \cite{KiMaNov} for a discussion on
capacities on metric spaces.

The Sobolev capacity is the correct gauge for distinguishing between
Newtonian functions: if $u\in N^{1,p}(X)$, then $u\sim v$ if and only
if $u=v$ \p-quasieverywhere, i.e., outside a set of zero Sobolev
\p-capacity. Moreover, by Shanmugalingam~\cite{Sh-rev} if $u,v\in
N^{1,p}(X)$ and $u=v$ $\mu$-a.e., then $u\sim v$. A function $u\in
N^{1,p}(X)$ is said to be \emph{quasicontinuous}, if there exists an
open set $G\subset X$ with arbitrarily small Sobolev \p-capacity such
that the restriction of $u$ to $X\setminus G$ is continuous. A mapping
in $N^{1,p}(X;\R^m)$ is said to be quasicontinuous if each of its
component functions is quasicontinuous. Recall that \emph{all}
functions in $N^{1,p}(X)$ are quasicontinuous, see Bj\"orn et
al.~\cite{BBS}. Since Newtonian functions have Lebesgue points outside
a set of zero Sobolev capacity, in what follows we may assume that
every Newtonian function is precisely represented.

\section{Graphs of Newtonian functions: Luzin's condition}
\label{sect:graph}

Let $Q>0$. Recall that a mapping $f:X\to Y$ is said to satisfy
\emph{Luzin's condition $(N_Q)$} if $\H^Q(f(E)) = 0$ whenever
$E\subset X$ satisfies $\mu(E) = 0$. By way of motivation, the
validity of Luzin's condition implies certain change of variable
formulas, thus it is of independent interest in analysis.

Let $E\subset X$. We denote by $\bar{f}:X \to X\times Y$ the
\emph{graph mapping of $f$}
\[
\bar{f}(x) = (x,f(x)), \quad x\in X,
\]
and $\GG_f(E)$ is the \emph{graph} of $f$ over $E$ defined by
\[
\GG_f(E) = \{(x,f(x)):\ x\in E\} \subset X\times Y.
\]
It is well known that if the mapping $f$ is Borel measurable, then the
graph $\GG_f(X)$ is Borel measurable as well, see, e.g., \cite[Lemma
18]{HajlaszProc}. We, furthermore, denote by $\pr_X:X\times Y \to X$
the projection $\pr_X(x,y)=x$, and by $\pr_Y:X\times Y \to Y$ the
projection $\pr_Y(x,y)=y$. Observe that $\Lip(\pr_X) = \Lip(\pr_Y) =
1$. Also it is well-known that if $f:X\to Y$ is continuous, then
$\GG_f(X)$ is homeomorphic to $X$.

\medskip

\begin{lemma} \label{measurability} Let $f:X\to \R^m$, $m\geq 1$, be
  measurable. Then $\pr_X(\GG_f(X)\cap E)$ is measurable for
  every Borel measurable subset $E\subset X\times \R^m$.
\end{lemma}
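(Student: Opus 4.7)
First, observe the elementary identity
\[
\pr_X\bigl(\GG_f(X)\cap E\bigr)=\bar f^{-1}(E),
\]
where $\bar f\colon X\to X\times\R^m$, $\bar f(x)=(x,f(x))$, is the graph map. Thus the claim reduces to showing that $\bar f^{-1}(E)$ is $\mu$-measurable for every measurable $E\subset X\times\R^m$; in other words, that $\bar f$ is measurable.

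The doubling condition (D) forces $X$ to be separable, so the Borel $\sigma$-algebra on the product satisfies $\mathcal{B}(X\times\R^m)=\mathcal{B}(X)\otimes\mathcal{B}(\R^m)$ and is generated by rectangles $U\times V$ with $U\in\mathcal{B}(X)$ and $V\in\mathcal{B}(\R^m)$. For any such rectangle,
\[
\bar f^{-1}(U\times V)=U\cap f^{-1}(V),
\]
which is $\mu$-measurable since $f$ is. Because the collection of $A\subset X\times\R^m$ for which $\bar f^{-1}(A)$ is $\mu$-measurable is itself a $\sigma$-algebra, it contains every Borel set, which gives the lemma for Borel $E$.

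To also cover sets that are measurable only with respect to the completion of a product outer measure, I would invoke Luzin's theorem. Since $\mu$ is Borel regular and locally finite on the complete separable space $X$, for each $n$ there is a compact $K_n\subset X$ with $f|_{K_n}$ continuous and $\mu\bigl(X\setminus\bigcup_n K_n\bigr)=0$. On each $K_n$ the restriction $\bar f|_{K_n}$ is continuous with continuous left inverse $\pr_X|_{\GG_f(K_n)}$, hence a homeomorphism onto the compact graph $\GG_f(K_n)\subset X\times\R^m$. Setting $N_0:=X\setminus\bigcup_n K_n$, we decompose
\[
\pr_X\bigl(\GG_f(X)\cap E\bigr)=\bigcup_n(\bar f|_{K_n})^{-1}\bigl(E\cap\GG_f(K_n)\bigr)\cup\pr_X\bigl(\GG_f(N_0)\cap E\bigr).
\]
Each set in the union is the preimage of a measurable subset of $\GG_f(K_n)$ under a homeomorphism and is therefore measurable in $K_n$, while the final term is contained in the $\mu$-null set $N_0$ and so is $\mu$-measurable by completeness of $\mu$.

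The only real obstacle is bookkeeping: matching the two measurability notions (the $\mu$-completion on $X$ and the chosen $\sigma$-algebra on $X\times\R^m$) correctly at each step. The analytic content is slight, with separability of $X$ (from the doubling condition) and Luzin's theorem for $\mu$-measurable maps carrying essentially the whole argument.
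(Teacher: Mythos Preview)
Your proof is correct and more direct than the paper's. The key observation $\pr_X(\GG_f(X)\cap E)=\bar f^{-1}(E)$ reduces the lemma to measurability of the graph map, which is elementary once $X$ is separable. The paper takes a different route: it sandwiches $f$ between Borel representatives $f_*\le f\le f^*$ with $f_*=f^*$ $\mu$-a.e., notes that the graphs $\GG_{f_*}(X)$ and $\GG_{f^*}(X)$ are Borel, invokes a theorem of Kuratowski to conclude that $\pr_X(\GG_{f_*}(X)\cap E)$ is Borel for Borel $E$, and then argues that $\pr_X(\GG_f(X)\cap E)$ differs from this only on the $\mu$-null set $\{f\ne f^*\}$. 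Your identity makes the appeal to Kuratowski unnecessary---indeed $\pr_X(\GG_{f_*}(X)\cap E)=\bar f_*^{\,-1}(E)$ is Borel simply because $\bar f_*$ is Borel---and dispenses with the sandwich construction altogether. Both arguments, as written, handle only Borel $E$; since no measure is specified on $X\times\R^m$ this is the natural reading of ``measurable'' in the statement (and is all that is used in the application to Theorem~\ref{thm:condN}), so your Luzin addendum is extra caution rather than a requirement.
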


\begin{proof}
  Let $f^\ast$ and $f_\ast$ be Borel measurable representatives of
  $f$; Borel regularity of the measure $\mu$ implies that if $f$ is
  measurable, then there exist Borel measurable functions
  $f_\ast,f^\ast$ such that $f_\ast\leq f \leq f^\ast$ and
  $f_\ast(x)=f^\ast(x)$ for $\mu$-a.e. $x\in X$. Thus the graph
  $\GG_{f_\ast}(X)$ of $f_\ast$ and the graph $\GG_{f^\ast}(X)$ of
  $f^\ast$ are Borel subsets of $X\times \R^m$. Then
  Kuratowski~\cite[Theorem 2, p. 385]{Kuratowski} implies that the
  projections $\pr_X(\GG_{f_\ast}(X)\cap E)$ and
  $\pr_X(\GG_{f^\ast}(X)\cap E)$ are Borel measurable for every Borel
  measurable set $E \subset X\times \R^m$. Since $f_\ast$ and $f^\ast$
  agree up to a set of $\mu$-measure zero, so do sets
  $\pr_X(\GG_{f^*}(X)\cap E)$ and $\pr_X(\GG_{f_*}(X)\cap E)$, implying
  that $\pr_X(\GG_f(X)\cap E)$ is $\mu$-measurable.
\end{proof}

\medskip

We now state a general criterion for the condition $(N_Q)$ similar to
that of Rad\'o and Reichelderfer, see \cite[V.3.6]{RaRe} and
Mal\'y~\cite{Maly}.  In Euclidean spaces this result was obtained by
Mal\'y et al.~\cite{MaSwaZi}.

In what follows, we suppose that $1\leq m < Q$, where $m$ is
related to $\R^m$.

\medskip

\begin{thm} \label{thm:condN} Suppose $X$ satisfies condition (D) and
  the lower mass bound \eqref{measuregrowth} is satisfied. Let $f:X^Q
  \to \R^m$ be a measurable function. Denote $$\Xi_{z,r} = \GG_f(X^Q)\cap
  B(z,r),$$ where $z\in X^Q\times\R^m$ and $0<r<\diam(X^Q)$. Suppose that
  there exists a weight $\Phi \in L^1\loc(X^Q)$ such that
  \begin{equation} \label{eq:cond} \H_\infty^{Q-m}(\pr_X(\Xi_{z,r}))\leq
    \frac1{\diam(\Xi_{z,r})^m}\int_{\pr_X(\Xi_{z,4r})}\Phi\,d\mu
\end{equation}
for all $z\in X^Q\times \R^m$ and all $0<r<\diam(X^Q)/4$. Then there
exists a positive constant $C<\infty$, depending on $C_\mu$ and $m$,
such that
\begin{equation} \label{quantitativecondN}
\H^Q(\bar{f}(E)) \leq C\int_E\Phi\,d\mu
\end{equation}
for each Borel measurable set $E \subset X^Q$. In particular,
$\bar{f}$ satisfies Luzin's condition $(N_Q)$.
\end{thm}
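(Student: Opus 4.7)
The plan is to reduce the global bound (\ref{quantitativecondN}) to a ball-wise $Q$-Hausdorff content estimate obtained from (\ref{eq:cond}), and then assemble it using a Vitali-type cover of $\bar f(E)$ in $X\times\R^m$.

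\textbf{Step 1 (geometric estimate).} First I would establish the purely geometric bound $\H^Q_{Cd}(\Xi_r)\le C d^{m}\H^{Q-m}_\infty(\pr_X(\Xi_r))$, where $d:=\diam(\Xi_r)$ and $C=C(m,C_\mu)$, for every ball $B(z,r)\subset X\times\R^m$. Given a cover $\{U_j\}$ of $\pr_X(\Xi_r)$ by sets of diameter $d_j\le d$ (possible since $\pr_X$ is $1$-Lipschitz, so $\diam\pr_X(\Xi_r)\le d$) with $\sum_j d_j^{Q-m}$ within $\eps$ of $\H^{Q-m}_\infty(\pr_X(\Xi_r))$, and using that $\pr_Y(\Xi_r)\subset\R^m$ also has diameter $\le d$, partition $\pr_Y(\Xi_r)$ into $N_j\le C_m(d/d_j)^m$ Euclidean balls of diameter $d_j$. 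The resulting product cells cover $\Xi_r$, each has diameter at most $Cd_j\le Cd$, and
\[
\sum_j N_j d_j^{Q}\;\le\;Cd^m\sum_j d_j^{Q-m}\;\le\;Cd^m\H^{Q-m}_\infty(\pr_X(\Xi_r)),
\]
proving the claim. Multiplying (\ref{eq:cond}) by $d^m$ and combining gives the ball-wise bound
\[
\H^Q_{Cd}(\Xi_r)\;\le\;C\int_{\pr_X(\Xi_{4r})}\Phi\,d\mu.
\]

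\textbf{Step 2 (global cover).} Fix small $\delta>0$. The $5r$-covering lemma applied to $\{B(\bar f(x),\delta):x\in E\}\subset X\times\R^m$ produces a countable disjoint subcollection $\{B(z_i,\delta)\}$ with $z_i=\bar f(x_i)$, $x_i\in E$, such that $\bar f(E)\subset\bigcup_i B(z_i,5\delta)\subset\bigcup_i\Xi_{5\delta}(z_i)$. Since $\diam\Xi_{5\delta}(z_i)\le 10\delta$, Step~1 combined with subadditivity of $\H^Q_{C\delta}$ yields
\[
\H^Q_{C\delta}(\bar f(E))\;\le\;\sum_i\H^Q_{C\delta}(\Xi_{5\delta}(z_i))\;\le\;C\sum_i\int_{\pr_X(\Xi_{20\delta}(z_i))}\Phi\,d\mu.
\]

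\textbf{Step 3 (bounded overlap).} Next I would show that the sets $\pr_X(\Xi_{20\delta}(z_i))$ have overlap multiplicity at most $N_0=N_0(C_\mu,m)$. If $x$ lies in such a set for some $i$, then $z_i\in B\bigl((x,f(x)),20\delta\bigr)$, so the associated disjoint balls $B(z_i,\delta)$ all sit inside $B\bigl((x,f(x)),21\delta\bigr)$; doubling of the product measure $\mu\otimes\mathcal{L}^m$ (inherited from doubling of $\mu$) then bounds the number of such indices. Since $\pr_X(\Xi_{20\delta}(z_i))\subset B_X(x_i,20\delta)\subset B_X(E,20\delta)$, bounded overlap gives
\[
\H^Q_{C\delta}(\bar f(E))\;\le\;CN_0\int_{B_X(E,20\delta)}\Phi\,d\mu.
\]

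\textbf{Step 4 (limit).} Letting $\delta\to 0$ and using that $\Phi\,d\mu$ is a locally finite Borel outer regular measure, so $\int_{B_X(E,20\delta)}\Phi\,d\mu\to\int_E\Phi\,d\mu$ along a suitable outer approximation of $E$, yields (\ref{quantitativecondN}); the special case $\mu(E)=0$ gives Luzin's condition $(N_Q)$ for $\bar f$.

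The main obstacle is Step~1: one must extract a $Q$-content bound from information only about the $(Q-m)$-content of the $X$-projection, which forces genuine use of the $m$-dimensional Euclidean structure of the range to produce the sharp power $d^m$. A secondary technical issue is the bounded-overlap argument in Step~3, which has to be run with respect to the product measure on $X\times\R^m$ rather than $\mu$ alone; once these two ingredients are in place, the remaining Vitali cover and passage to the limit are standard.
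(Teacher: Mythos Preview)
Your Step~1 geometric estimate is the same as the paper's (the paper also proves $\H^Q_\infty(\Xi_r)\le C\diam(\Xi_r)^m\H^{Q-m}_\infty(\pr_X(\Xi_r))$ by covering $\pr_X(\Xi_r)$ and stacking $\R^m$-cubes on each piece). The real divergence is in how you pass from the ball-wise bound to the global inequality~(\ref{quantitativecondN}).

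The paper does not run a second Vitali cover on $\bar f(E)$. Instead it packages the right-hand side of (\ref{eq:cond}) into a set function $\sigma(A)=\int_{\pr_X(\GG_f\cap A)}\Phi\,d\mu$ on $X\times\R^m$, observes via Lemma~\ref{measurability} that $\sigma$ extends to a Radon measure, and then combines $\H^Q_\infty(\Xi_r)\le C\sigma(B(z,4r))$ with Federer's density lemma ($\limsup_{r\to0}\H^Q_\infty(\Xi_r)/r^Q\ge C$ for $\H^Q$-a.e.\ $z$ on the graph) to conclude $\H^Q\rtangle\GG_f\le C\sigma^*$. The final passage to $\int_E\Phi\,d\mu$ is then a one-line outer-regularity argument on Borel supersets $G\supset E$. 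Your route replaces the density theorem by an explicit bounded-overlap count using the doubling of $\mu\otimes\mathcal L^m$; this is more hands-on and arguably more elementary, since it avoids invoking \cite[Lemma~10.1]{FedererTAMS} and the abstract density comparison, at the cost of having to check doubling on the product space.

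One point in your Step~4 needs tightening: the neighborhoods $B_X(E,20\delta)$ decrease to $\overline E$, not to $E$, so the stated limit only yields $\int_{\overline E}\Phi\,d\mu$. The fix is easy and fits your scheme: given $\eps>0$, first choose an open $U\supset E$ with $\int_U\Phi\le\int_E\Phi+\eps$ (outer regularity of the Radon measure $\Phi\,d\mu$), then in Step~2 take variable radii $r_x<\delta$ small enough that $B_X(x,20r_x)\subset U$. The bounded-overlap argument in Step~3 is unaffected, the right-hand side becomes $CN_0\int_U\Phi\,d\mu$, and letting $\delta\to0$ and then $\eps\to0$ gives (\ref{quantitativecondN}). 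This is precisely the kind of issue the paper's $\sigma^*$-formulation sidesteps automatically.
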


\begin{proof}
  Define a set function $\sigma$ on the Cartesian product $X^Q\times
  \R^m$ by
\[
\sigma(E) = \int_{\pr_X(\GG_f(X^Q) \cap E)}\Phi\,d\mu, \quad E\subset
X^Q\times\R^m.
\]
By a Vitali-type covering theorem there is a pairwise disjoint
countable subfamily of balls $\{B_i\}:=\{B(x_i,r_i)\}$ such that we
may cover $\pr_X(\Xi_{z,r})$ as follows
\[
\pr_X(\Xi_{z,r}) \subset \bigcup_i B(x_i,5r_i) =:\bigcup_i 5B_i.
\]
For each $i$ let $M_i$ denote the greatest integer satisfying
\begin{equation*}
(M_i-1)r_i < \diam(\Xi_{z,r}).
\end{equation*}
Since $\Xi_{z,r} \cap \pr_X^{-1}(5B_i)$ is bounded in $X^Q\times \R^m$, it
can be contained in a large enough cylinder of the form $B(x_i,5r_i)
\times \mathcal{R}_i$, where $\mathcal{R}_i$ is a cube in $\R^m$ with
side-length $\diam(\Xi_{z,r})$. Since $M_ir_i \geq \diam \Xi_{z,r}$,
$\mathcal{R}_i$ may be covered by $M_i^m$ cubes $\{\mathcal{R}_i^j\}$
with side $r_i$. We hence obtain
\begin{align*}
  \H_\infty^Q & (\Xi_{z,r} \cap \pr_X^{-1}(5B_i)) \leq CM_i^mr_i^Q \leq C(M_ir_i)^mr_i^{Q-m} \\
  & \leq C(\diam(\Xi_{z,r}) + r_i)^m\mu(5B_i)(5r_i)^{-m}.
\end{align*}
As $r_i\approx \diam(5B_i) \leq \diam\pr_X(\Xi_{z,r}) \leq \diam(\Xi_{z,r})$ summing over $i$
shows that
\[
\H_\infty^Q(\Xi_{z,r}) \leq
C\diam(\Xi_{z,r})^m\sum_{i=1}^\infty\frac{\mu(5B_i)}{(5r_i)^m}.
\]
Hence by taking the infimum over all coverings we have obtained the
following estimate
\[
\H_\infty^Q(\Xi_{z,r}) \leq
C\diam(\Xi_{z,r})^m\widetilde\H_\infty^m(\pr_X(\Xi_{z,r})),
\]
where the constant $C$ depends only on $C_\mu$ and $m$. Assumption
\eqref{eq:cond} together with this estimate gives for each $z\in
X\times \R^m$ and $0<r<\diam(X^Q)/4$
\begin{align} \label{eq:sigma}
  \H_\infty^Q(\Xi_{z,r}) & \leq C\diam(\Xi_{z,r})^m\widetilde\H_\infty^m(\pr_X(\Xi_{z,r})) \\
  & \leq C\int_{\pr_X(\Xi_{z,4r})}\Phi\,d\mu \leq
  C\sigma(B(z,4r)). \nonumber
\end{align}
Since for $\H^Q$-almost every $z \in \GG_f(X^Q)$, see
Federer~\cite[Lemma 10.1]{FedererTAMS},
\begin{equation}\label{upperdest}
\limsup_{r\to 0\limplus}
\frac{\H_\infty^Q (\Xi_{z,r})}{\omega_Qr^Q } \geq C,
\end{equation}
it follows from \eqref{eq:sigma} that
\[
\limsup_{r\to 0\limplus} \frac{\sigma(B(z,r))}{\omega_Qr^Q} \geq C
\]
for $\H^Q$-almost every $z \in \GG_f(X^Q)$. Lemma~\ref{measurability}
implies that $\sigma$ is a measure on the Borel sigma algebra of
$X^Q\times \R^m$, and it may be extended to a regular Borel outer
measure $\sigma^\ast$ on all of $X^Q\times\R^m$ in the usual way
\[
\sigma^\ast(A) :=\inf\{\sigma(E):\ A\subset E,\ E \textrm{ is a Borel set}\}.
\]
Since $\Phi \in L^1\loc(X^Q)$ it follows that $\sigma^\ast$ is a
Radon measure on $X^Q\times \R^m$. Therefore, by \eqref{upperdest}
\[
\H^Q (E) \leq C\sigma^\ast(E)
\]
for all $E\subset \GG_f(X^Q)$. Finally, given a $\mu$ measurable set
$E\subset X^Q$, choose a Borel set $G$ with $E\subset G$. Then
$\bar{f}(E)\subset G\times \R^m$, $G\times \R^m$ is a Borel set, and
\[
\H^Q(\bar{f}(E)) \leq C\sigma^\ast(\bar{f}(E)) \leq
C\sigma(G\times \R^m) = C\int_G\Phi\,d\mu.
\]
The proof is completed by taking the infimum over all such $G$.  If
$E\subset X^Q$ such that $\mu(E) = 0$ then it readily follows that $\H^Q
(\overline{f}(E)) = 0$. This completes the proof.
\end{proof}

\medskip

In \eqref{eq:cond} we may replace the Hausdorff content
$\H_\infty^{Q-m}(\pr_X(\Xi_{z,r}))$ with an inequality involving
$\widetilde\H_\infty^m(\pr_X(\Xi_{z,r}))$ on the left hand side.

We shall show, as an application of Theorem \ref{thm:condN}, that the
graph mapping of a Newtonian function satisfies a version of Luzin's
condition $(N_Q)$. We start with a few auxiliary estimates. We shall
need the following relation between the \p-capactity and the Hausdorff
content when $p\geq 1$. For the proof of the next lemma the reader
should consult Costea~\cite[Thoerem 4.4]{Costea} and Kinnunen et
al. in \cite[Theorem 3.5]{KiKoShTu} for the case (I) and (II),
respectively.

\medskip

\begin{lemma} \label{lemma:capcontent} Suppose $X$ satisfies
  conditions (D) and (PI), and the lower mass bound
  \eqref{measuregrowth} is satisfied.
\begin{itemize}

\item[(I)] Let $1<p\leq Q$ and $E\subset X$ and suppose $Q-p<t\leq
  Q$. Then
\[
\H_\infty^t (E\cap B(x,r)) \leq Cr^{t-Q+p}\Capc_{p}(E\cap B(x,r),B(x,2r)),
\]
where $x\in X$, $r>0$, and $C$ depends on $C_\mu$, $p$, $t$, and the
constants in the weak $(1,p)$-Poincar\'e inequality.

\item[(II)] Let $p=1$ and $E\subset X$ compact. Then
\[
\widetilde\H_\infty^1(E)\leq C\capc_1(E),
\]
where the constant $C$ depends only on the doubling constant $C_\mu$
and the constants in the weak $(1,1)$-Poincar\'e inequality.
\end{itemize}
\end{lemma}

\medskip

\begin{remark}
  If $u\in N^{1,p}_0(B(x,2r);\R^m)$ such that $u \geq 1$ on $E\cap
  B(x,r)$, $g_u$ is a minimal $p$-weak upper gradient of $u$, and $m$,
  where $1\leq m < \min\{p,Q\}$, we obtain
\[
\H_\infty^{Q-m}(E\cap B(x,r)) \leq Cr^{p-m}\int_{B(x,2r)} g_u^p\,
d\mu,
\]
where the constant $C$ is as in Lemma~\ref{lemma:capcontent} (I).

If $u\in N^{1,1}(X;\R)$ such that $u\geq 1$ on $E$ and $g_u$ is a
minimal 1-weak upper gradient of $u$, Lemma~\ref{lemma:capcontent}
(II) implies that
\[
\widetilde\H_\infty^1(E)\leq C\int_X g_u\, d\mu,
\]
where the constant $C$ is from Lemma~\ref{lemma:capcontent} (II).
\end{remark}

\medskip

The preceding estimates imply the following. Observe also that the
graph mapping is always one-to-one.

\medskip

\begin{thm} \label{thm:ucondN} Suppose that $X$ satisfies conditions
  (D) and (PI) with some $1\leq p\leq Q$, and the lower mass bound
  \eqref{measuregrowth} is satisfied.  Let $u \in N^{1,p}(X^Q;\R^m)$,
  where either $p > m$ or $p\geq m=1$. Then the graph mapping
  $\overline{u}$ satisfies Luzin's condition $(N_Q)$.
\end{thm}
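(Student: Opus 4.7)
The plan is to apply Theorem~\ref{thm:condN} with the weight $\Phi := C(g_u^p + 1)$ when $p > 1$, and $\Phi := C(g_u + 1)$ when $p = m = 1$; in both cases $\Phi \in L^1_{\text{loc}}(X^Q)$, so it suffices to verify the inequality \eqref{eq:cond} for such $\Phi$.

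Fix $z \in X^Q \times \R^m$ and $0 < r < \diam(X^Q)/4$; set $\delta := \diam(\Xi_r)$ and $F := \pr_X(\Xi_r)$. Because $\pr_X$ and $\pr_Y$ are $1$-Lipschitz, $\diam F \leq \delta$ and $\diam u(F) \leq \delta$. Assuming $F \neq \emptyset$, pick $x_0 \in F$; then $F \subset B(x_0, 2\delta)$ and $|u(x) - u(x_0)| \leq \delta$ for every $x \in F$. The heart of the argument is to build a test function admissible for $\Capc_p(F, B(x_0, 4\delta))$ (or $\capc_1(F)$ when $p=1$) whose gradient is of order $1/\delta$ and whose support is confined to a controlled enlargement of $\pr_X(\Xi_r)$. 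Let $\eta$ be a Lipschitz cutoff with $\eta \equiv 1$ on $B(x_0, 2\delta)$, $\eta \equiv 0$ off $B(x_0, 4\delta)$, and $\Lip \eta \leq C/\delta$. Let
\[
\psi(y) := \min\bigl(\max(2 - |y - u(x_0)|/\delta,\; 0),\; 1\bigr),
\]
which is $(1/\delta)$-Lipschitz on $\R^m$ and equals $1$ on $\{|y - u(x_0)| \leq \delta\}$; hence $\psi \circ u$ has a $p$-weak upper gradient bounded by $g_u/\delta$ and equals $1$ on $F$. Finally set $v := \min(\psi \circ u, \eta)$. Then $v \equiv 1$ on $F$, $v \in N^{1,p}_0(B(x_0, 4\delta))$, $g_v \leq (g_u + C)/\delta$ $\mu$-a.e., and, most importantly,
\[
\supp v \;\subset\; B(x_0, 4\delta) \cap \{|u - u(x_0)| \leq 2\delta\} \;\subset\; \pr_X(\Xi_{Kr}),
\]
for an absolute constant $K$, by the triangle inequality in the product metric together with $\delta \leq 2r$.

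For $p > m$ apply Lemma~\ref{lemma:capcontent}(I) with $t = Q - m$ (the hypothesis $p > m$ is precisely $t > Q - p$) to $F \subset B(x_0, 2\delta)$ to obtain
\[
\H_\infty^{Q-m}(F) \;\leq\; C\delta^{p - m}\Capc_p(F, B(x_0, 4\delta)) \;\leq\; C\delta^{p-m}\|g_v\|_{L^p(X)}^p \;\leq\; \frac{C}{\delta^m} \int_{\pr_X(\Xi_{Kr})} (g_u^p + 1)\, d\mu,
\]
which, since $\delta = \diam(\Xi_r)$, is exactly \eqref{eq:cond} with $K$ in place of $4$; the dilation factor in Theorem~\ref{thm:condN} is inessential, so this suffices. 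In the remaining case $p = m = 1$, the same $v$ together with Lemma~\ref{lemma:capcontent}(II) yields $\H_\infty^{Q-1}(F) \leq C \capc_1(F) \leq C\|g_v\|_{L^1(X)} \leq (C/\delta) \int_{\pr_X(\Xi_{Kr})} (g_u + 1)\, d\mu$, again verifying \eqref{eq:cond}. Theorem~\ref{thm:condN} now delivers \eqref{quantitativecondN}, hence Luzin's condition $(N_Q)$ for $\overline{u}$.

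The principal difficulty is engineering $v$ so that its support is confined to an enlargement of $\pr_X(\Xi_r)$ rather than to a full ball in $X^Q$: if one merely cut off in $X^Q$ at scale $\delta$, the resulting capacity bound would pick up a term $\mu(B(x_0,4\delta))/\delta^p \sim \delta^{Q-p}$ from the cutoff itself, which does not fit the right-hand side of \eqref{eq:cond}. The $\R^m$-side truncation $\psi \circ u$ at the \emph{oscillation} scale $\delta$ (rather than at the ambient scale $r$) both forces $\supp v$ onto the graph region and produces the sharp factor $1/\delta^m$ required by \eqref{eq:cond}, and this is made possible precisely by the bound $\diam u(F) \leq \delta$.
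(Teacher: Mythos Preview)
Your proof is correct and follows essentially the same approach as the paper: build a capacity test function by combining a spatial Lipschitz cutoff with a target-side truncation $\psi\circ u$, confine its support to an enlargement of $\pr_X(\Xi_r)$, and feed the resulting bound into Lemma~\ref{lemma:capcontent} and Theorem~\ref{thm:condN}. The only cosmetic differences are that the paper works at the ambient scale $r$ with center $z=(\tilde x,\tilde y)$ (so the dilation factor comes out exactly $4$) rather than at the oscillation scale $\delta=\diam(\Xi_r)$ with a base point on the graph, and uses the product $v\eta$ rather than a minimum; in particular your closing remark that the scale $\delta$ is essential is a bit overstated, since the paper shows scale $r$ works equally well.
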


\medskip

The assumption that $p>m$ or $p\geq m=1$ is necessary already in the
Euclidean case. We refer to a discussion in Mal\'y et
al.~\cite{MaSwaZi}.

\begin{proof}[Proof of Theorem~\ref{thm:ucondN}]
  It is sufficient to verify the hypothesis of Theorem~\ref{thm:condN}
  with some locally integrable function $\Phi$ on $X^Q$.

  Assume first $p>m$ and, to this end, fix a point $z=(\tilde x,\tilde
  y) \in X^Q\times \R^m$ and $r>0$. We observe the following
\[
\Xi_{z,r} = \GG_u(X^Q)\cap B(z,r) \subset (\GG_u(X^Q) \cap (B_X(\tilde
x,r)\times B(\tilde y,r))).
\]
Hence we have that
\[
\pr_X(\Xi_{z,r}) \subset (B_X(\tilde x,r)\cap u^{-1}(B(\tilde y,r))),
\]
moreover $u(x) \in B(\tilde y,r)$ for $\mu$-a.e. $x\in B_X(\tilde x,r)\cap
u^{-1}(B(\tilde y,r))$. Let us define the function $v:X^Q\to \R$ by
\[
v(x) = \max\left\{2-\frac{|u(x)-u(\tilde x)|}{r},0\right\},
\]
and consider an open subset $O\subset X^Q$ such that $\{x\in X^Q:
v(x)>0\}\subset O$. Then $(g_u/r)\chi_O$ is a \p-weak upper gradient of
$v$ \cite[Lemma 4.3]{Sh-rev}, where $g_u$ is a minimal \p-weak upper
gradient of $u$. Let $\eta:X^Q \to \R$ be a Lipschitz cut-off function
so that $\eta = 1$ on $B_X(\tilde x,r)$, $\eta = 0$ in $X^Q\setminus
B_X(\tilde x,2r)$, $0\leq\eta\leq 1$, and $g_\eta \leq 2/r$. Then
$v\eta \geq 1$ on $B_X(\tilde x,r)\cap u^{-1}(B(\tilde y,r))$, and
$v\eta \in N_0^{1,p}(B(\tilde x,2r))$. Moreover, the product rule for
upper gradients gives us the following $g_{v\eta}\leq g_v + 2v/r$
$\mu$-a.e. Thus $v\eta$ is admissible for the relative \p-capacity and
Lemma~\ref{lemma:capcontent} (I) implies that
\begin{align*}
  \H_\infty^{Q-m} & (\pr_X(\Xi_{z,r})) \leq \H_\infty^{Q-m}(B_X(\tilde x,r)\cap u^{-1}(B(\tilde y,r))) \\
  & \leq Cr^{p-m}\int_{B_X(\tilde x,2r)\cap O}g_{v\eta}^{p}\, d\mu \\
  & \leq Cr^{p-m}\int_{B_X(\tilde x,2r)\cap O}
  \left(\frac{v^{p}}{r^p} + g_v^{p}\right)\, d\mu  \\
  & \leq Cr^{-m}\int_{B_X(\tilde x,2r)\cap u^{-1}(B(\tilde
    y,2r))}(1+g_u^{p})\, d\mu.
\end{align*}
Since
\[
B_X(\tilde x,2r)\cap u^{-1}(B(\tilde y,2r)) \subset \pr_X(\Xi_{z,4r}),
\]
above reasoning gives us that
\[
\H_\infty^{Q-m}(\pr_X(\Xi_{z,r})) \leq \frac{C}{r^m}\int_{\pr_X(\Xi_{z,4r})}(1+g_u^{p})\, d\mu.
\]
This verifies the assumptions of Theorem~\ref{thm:condN} with $\Phi =
C(1+g_u^{p})$, and thus concludes the proof when $p>m$.  The case
$p\geq m=1$ is dealt with by a similar argument together with the
estimate in Lemma~\ref{lemma:capcontent} (II).
\end{proof}

\section{Aspects of area formulas for Newtonian functions}
\label{sect:area}

In this section we shall prove versions of the area formula for
Newtonian functions. In the metric measure space setting these
formulas have been studied previously by
Ambrosio--Kirchheim~\cite{AmKi}, Magnani~\cite{MagnaniIndiana,
  MagnaniX}, and Mal\'y~\cite{MalyX, MalyCoarea}, to name but a
few. In particular, in \cite{MalyX} coarea properties and coarea
formula, which is considered as dual to the area formula, are
thoroughly studied in metric spaces. We also refer to Haj\l
asz~\cite{HajlaszProc} for a very nice discussion on the topic in
Euclidean spaces.

We define the generalized Jacobian of a continuous map $f:X\to Y$ at
$x$ as follows
\[
\mathcal{J}f(x) :=\limsup_{r\to 0}\frac{\nu(f(B(x,r)))}{\mu(B(x,r))},
\]
where, we recall, $\nu$ measures $Y$. It follows from
\cite[2.2.13]{Federer} applied to the pull-back measure $\nu_f(E)
:=\nu(f(E))$, that $f(E)$ is $\nu$-measurable for every Borel set
$E\subset X$. Moreover, for $\mu$-a.e. $x$, the generalized Jacobian
$\mathcal{J}f(x)$ is finite, see Federer~\cite[2.9]{Federer}. It is
also easy to see that if $g:X\to Y$ is another continuous map such
that $g=f$ on an open subset $A\subset X$, then
$\mathcal{J}f(x)=\mathcal{J}g(x)$ for $\mu$-a.e. $x\in A$.

An alternative, but maybe less tractable, way to define a generalized
Jacobian of $f$ at $x$ could be as follows. Set
\[
\mathcal{\widetilde J}f(x):=\limsup_{r\to
  0}\frac{f^*\nu(B(x,r))}{\mu(B(x,r))},
\]
where $f^*\nu$ is a measure which results by Carath\'eodory's
construction from $\zeta(A)=\nu(f(A))$, $A\subset X$, on the family of
all Borel subsets of $X$, see \cite[2.10.1]{Federer}. Hence if $A$ is a Borel
subset of $X$, then 
\[
f^*\nu(A) = \sup\left\{\sum_{B\in \mathcal{H}}\zeta(B): \mathcal{H}
  \textrm{ is a Borel partition of } A\right\}
\]
cf. \cite[Theorem 2.10.8]{Federer}; for any Borel set $A\subset X$ the
following identity will be satisfied \cite[Theorem 2.10.10]{Federer}
\[
f^*\nu(A) = \int_Y N(f|_A, y)\, d\nu(y),
\]
where the \emph{multiplicity function} of $f$ relative to a subset $A$
is written as $N(f|_A, y) = \# (A\cap f^{-1}(y))$ for each $y\in Y$.

To compare these two notions, we have that
\[
\mathcal{J}f(x)= \mathcal{\widetilde J}f(x)=\mathcal{J}f|_D(x) 
\]
for $\mu$-a.e. $x\in D$, where $D\subset X$ is closed and $f|_D$ is
assumed to be one-to-one. Here we denote
\[
\mathcal{J}f|_D(x):=\limsup_{r\to 0}\frac{\nu(f(B(x,r)\cap
  D))}{\mu(B(x,r))}.
\]
Let us clarify this. Clearly, $\mathcal{J}f|_D(x)\leq \mathcal{J}f(x)
\leq \mathcal{\widetilde J}f(x)$ for $\mu$-a.e. $x\in D$. On the other
hand, since $f$ is one-to-one on $D$ we have that
$\zeta(A):=\nu(f(A))$ is, in fact, a measure on $D$, and that
$\zeta(A) = f^*\nu(A)$ for every Borel subset of $D$. Thus we obtain
as in Magnani~\cite[proof of Theorem 2]{MagnaniX} for every (density
point) $x\in D$
\begin{align*}
  \mathcal{\widetilde J}f(x) & \leq \limsup_{r\to
    0}\frac{\nu(f(B(x,r)\cap D))}{\mu(B(x,r))}
  + \limsup_{r\to 0}\frac{f^*\nu(B(x,r)\setminus D)}{\mu(B(x,r))} \\
  & = \mathcal{J}f|_D(x),
\end{align*}
where the last equality follows form \cite[Corollary 2.9.9]{Federer}
applied to $\mathcal{\widetilde J}f(x)\chi_D$, where $\chi_D$ is the
characteristic function of the set $D$.

Magnani~\cite{MagnaniX} has recently presented a unified approach to
the area formula for merely continuous mappings between metric spaces,
and thus without any notion of differentiability. We remark that in
the present paper a function in $N\loc^{1,p}(X^Q;\R^m)$ although
having some ``differentiability'' properties, need not to be even
continuous as all Newtonian functions are, a priori, only
quasicontinuous. Let us state the following area formula.

\medskip

\begin{theorem} \label{thm:area}
  Suppose $X$ satisfies conditions (D) and (PI) with some $1\leq p\leq
  Q$, and the lower mass bound \eqref{measuregrowth} is satisfied. Let
  $u \in N\loc^{1,p}(X^Q;\R^m)$, where $p > m$ or $p \geq m=1$.  Then
  the following area formula is valid
  \begin{equation} \label{eq:Area1} \H^Q(\bar u(A)) = \int_A
    \mathcal{J}\bar u(x)\, d\mu(x),
\end{equation}
whenever $A\subset X$ is $\mu$-measurable. 
\end{theorem}

\begin{proof}
  By Theorem~\ref{thm:ucondN} the graph mapping $\bar u$ satisfies
  Luzin's condition ($N_Q$) and is, moreover, one-to-one on $X$. Thus
  the pull-back measure $\H^Q(\bar u (A))$, $A\subset X^Q$ arbitrary
  $\mu$-measurable subset, is absolute continuous with respect to the
  doubling measure $\mu$.

  Let $\{f_i\}_{i\geq 1}$, $f_i: X^Q \to \R^m$, be a sequence of
  Lipschitz functions and $E_1\subset E_2 \subset \ldots \subset X^Q$
  associated closed sets such that $u_i:= u|_{E_i} = f_i|_{E_i}$ and
  $\mu(X^Q\setminus \bigcup_i E_i) = 0$. The existence of such sets
  and functions follows from Theorem~\ref{thm:Luzin}. Then the
  following identity is valid by the area formula obtained in
  \cite{MagnaniX}
  \begin{equation} \label{eq:area} \int_{E_i}\mathcal{J}\bar{f}_i(x)\,
    d\mu(x) = \H^Q(\bar f_i(E_i)).
\end{equation}
Since $u_i(x)=f_i(x)$ for $x\in E_i$, $E_i$ closed, it follows that
$\mathcal{J}\bar u_i(x) = \mathcal{J}\bar f_i(x)$ for $\mu$-a.e. $x\in
E_i$. The equality \eqref{eq:area} remains true for measurable
$A\subset E_\infty$, where $E_\infty=\bigcup_{i=1}^\infty E_i$, and moreover,
\eqref{eq:area} will also be valid whenever $\mu(A)=0$. Thus
\eqref{eq:Area1} holds for all $\mu$-measurable set $A\subset X^Q$.
\end{proof}

\medskip

Let us discuss an alternative formulation of the area formula which
can be obtained by using Theorem~2 in Magnani~\cite{MagnaniX}. Assume
$X$ satisfies conditions (D) and (PI) with some $1\leq p<\infty $, and
assume further that there exist disjoint $\mu$-measurable sets
$\{A_j\}_{j\geq 1}$ such that they occupy $\mu$-a.e. of $X$, i.e.
$\mu(X\setminus\bigcup_j A_j)=0$. Let $u \in N\loc^{1,p}(X^Q;\R^N)$,
where $Q\leq N$. Assume further that $u$ satisfies Luzin's condition
($N_Q)$ and $u|_{A_j}$ is one-to-one for each $i=1,2,\ldots\,$. Then
the following area formula is valid
\[
\int_A \theta(x)\mathcal{J}u(x)\, d\mu(x) =
\int_{\R^N}\sum_{x\in u^{-1}(y)}\theta(x)\, d\H^N(y),
\]
whenever $A\subset X$ is $\mu$-measurable and $\theta:A\to[0,\infty]$
is a measurable function. In particular,
\[
\int_A \mathcal{J}u(x)\, d\mu(x) =
  \int_{\R^N} N(u|_A, y)\, d\H^N(y)
  \]
is valid whenever $A\subset X$ is $\mu$-measurable.

\section{Newtonian functions: absolute continuity, Rad\'o,
  Reichelderfer, and Mal\'y}
\label{sect:RR}

Absolutely continuous functions on the real line satisfy Luzin's
condition, are continuous, and differentiable almost everywhere. It is
well-known that these properties for the Sobolev class $W^{1,p}(\R^m)$
depend on $p$. For instance, functions in $W^{1,m}(\R^m)$ may be
nowhere differentiable and nowhere continuos whereas functions in
$W^{1,p}(\R^m),\, p>m$, have H\"older continuous representatives and are
differentiable almost everywhere. We consider Luzin's condition,
absolute continuity, and differentiability for the Banach space valued
Newtonian space $N^{1,p}(X^Q;\V)$, when $p\geq Q$, and thus extend
some related results studied in Heinonen et al.~\cite{HKST}. Here $\V
:= (\V,\|\cdot\|_\V)$ is an arbitrary Banach space of positive
dimension. We refer the reader to \cite{HKST} for a
detailed discussion on the Banach space valued Newtonian
functions. Suppose $X$ satisfies conditions (D) and (PI) with some
$1\leq p<\infty$; the following is known:

\medskip

\begin{itemize}

\item Let $p>Q$. In this case each function $u\in N^{1,p}(X^Q;\R)$ is
  locally $(1-Q/p)$-H\"older continuous
  (Shanmugalingam~\cite{Sh-rev}), moreover $u$ is differentiable
  $\mu$-a.e. with respect to the strong measurable differentiable
  structure (see Cheeger~\cite{Cheeger}). For the latter result we
  refer to Balogh et al.~\cite{BaRoZu}.

\item Let $p=Q$. Then every continuous pseudomonotone mapping in
  $N^{1,Q}\loc(X^Q; \V)$ satisfies Luzin's condition $(N_Q)$ (Heinonen
  et al.~\cite[Theorem 7.2]{HKST}).

\end{itemize}

\medskip

It would be interesting to generalize Calderon's differentiability
theorem to Banach space valued Newtonian functions.

Recall that following  Mal\'y--Martio~\cite{MaMa}, a map $f : X \to \V$
is \emph{pseudomonotone} if there exists a constant $C_M\geq 1$ and $r_M
> 0$ such that
\[
\diam(f(B(x,r))) \leq C_M\diam(f(\partial B(x,r)))
\]
for all $x\in X$ and all $0 < r < r_M$. Note that we denote $\partial
B(x,r) := \{y\in X : d(y,x)=r\}$.

Let $\Om$ be open such that $\overline{\Om} \subset X^Q$. We show next
that $u\in N^{1,p}(\Om;\V)$, $p\geq Q$, is absolutely continuous in
the following sense. Following Mal\'y~\cite{Maly} we say that a
mapping $f:\Om\to \V$ is \emph{$Q$-absolutely continuous} if for each
$\eps>0$ there exists $\delta=\delta(\eps)>0$ such that for every
pairwise disjoint finite family $\{B_i\}_{i=1}^\infty$ of (closed)
balls in $\Om$ we have that
\[
\sum_{i=1}^\infty\diam(f(B_i))^Q < \eps,
\]
whenever $\sum_{i=1}^\infty\mu(B_i) <\delta$. Furthermore, we say that
a mapping $f: X \to \V$ satisfies the \emph{$Q$-Rad\'o--Reichelderfer
  condition}, \emph{condition (RR)} for short, if there exists a
non-negative control function $\Phi_f\in L^1\loc(X)$ such that
\begin{equation} \label{RR}
\diam(f(B(x,r)))^Q \leq \int_{B(x,r)}\Phi_f\, d\mu
\end{equation}
for every ball $B(x,r)\subset X$ with $0<r<R$. A condition similar to
this was used by Rad\'o and Reichelderfer in \cite[V.3.6]{RaRe} as a
sufficient condition for the mappings with the condition (RR) to be
differentiable a.e. and to satisfy Luzin's condition, see also
Mal\'y~\cite{Maly}. A function $f$ is said to satisfy condition (RR)
weakly if \eqref{RR} holds true with a dilated ball $B(x,\alpha r)$,
$\alpha>1$, on the right-hand side of the equation.

It readily follows that condition (RR) implies (local) $Q$-absolute
continuity of $f$. Indeed, let $\eps>0$ and $\{B(x_i,r_{x_i})\}$,
$0<r_{x_i}<R$, a pairwise disjoint finite family of balls in $\Om$
such that $E = \bigcup_iB(x_i,r_{x_i})$, and $\mu(E)<\delta$. Then
condition (RR) and pairwise disjointness of $\{B(x_i,r_{x_i})\}$ imply
\[
\sum_i\diam(f(B(x_i,r_{x_i})))^Q \leq
\sum_i\int_{B(x_i,r_{x_i})}\Phi_f\, d\mu = \int_{E}\Phi_f\, d\mu <\eps.
\]
Local absolute continuity of a function follows even if the functions
satisfies condition (RR) weakly.

Condition (RR) also implies that the map $f$ has finite pointwise
Lipschitz constant almost everywhere, see
Wildrick--Z\"urcher~\cite[Proposition 3.4]{WiZu}. Combined with a
Stepanov-type differentiability theorem \cite{BaRoZu}, this has
implications for differentiability \cite{Cheeger}. We also refer to a
recent paper \cite{WiZu2}.

For the next proposition, we recall that the noncentered
Hardy--Littlewood maximal function restricted to $\Om$, denoted
$\mathcal{M}_\Om$, is defined for an integrable (real-valued) function $f$ on
$\Om$ by
\[
\mathcal{M}_\Om f(x) := \sup_{B}\avint_{B(x,r)}|f|\, d\mu,
\]
where the supremum is taken over all balls $B\subset\Om$ containing
$x$. Consider further the restrained noncentered maximal function
$\mathcal{M}_{\Om,R}$ in which the supremum is taken only over balls
in $\Om$ with radius less than $R$. Then $\mathcal{M}_\Om f =
\sup_{R>0}\mathcal{M}_{\Om,R}f$. It is standard also in the metric
space setting, we refer to Heinonen~\cite{heinonen}, that for $1<p\leq
\infty$ the operator $\mathcal{M}_\Om$ is bounded on $L^P$, i.e.,
there exists a constant $C$, depending on $C_\mu$ and $p$, such that
for all $f\in L^p$
\[
\|\mathcal{M}f\|_{L^p} \leq C\|f\|_{L^p}.
\]
We have the following generalization.

\medskip

\begin{prop} \label{AC} Suppose $X$ satisfies conditions (D) and (PI)
  with

\begin{itemize}

\item[(I)] $p=Q$. If $u\in N\loc^{1,Q}(X^Q; \V)$ is continuous and
  pseudomonotone, then $u$ satisfies condition (RR), and thus is
  (locally) $Q$-absolutely continuous.

\item[(II)] some $p>Q$. Then $u\in N\loc^{1,p}(X^Q; \V)$ satisfies
  condition (RR) weakly, and thus is (locally) $Q$-absolutely
  continuous.

\end{itemize}
\end{prop}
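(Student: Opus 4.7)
The plan is, in each case, to verify the Rad\'o--Reichelderfer condition (RR)---or its weak form in Part~(II)---for $u$; local $Q$-absolute continuity then follows from the observation preceding the proposition.

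For Part~(II), I would first execute the standard Morrey-type telescoping argument. Applied to the dyadic balls $B_k = B(y,2^{-k}r)$, the weak $(1,p)$-Poincar\'e inequality together with the reverse doubling estimate $\mu(B_k) \geq c\,2^{-kQ}\mu(B_0)$ gives
\[
d_{\V}(u(y_1),u(y_2)) \leq C\,r\,\Big(\vint_{B(x,\tau r)} g_u^p\,d\mu\Big)^{1/p}
\]
for $y_1,y_2 \in B(x,r)$, with the convergence of the resulting geometric series $\sum 2^{-k(1-Q/p)}$ being precisely the point at which $p>Q$ enters. Raising to the $Q$-th power, applying Young's inequality with conjugate exponents $p/(p-Q)$ and $p/Q$ to the product $r^Q\mu(B(x,\tau r))^{-Q/p}\bigl(\int g_u^p\bigr)^{Q/p}$, and invoking the local lower mass bound $\mu(B(x,\tau r)) \geq c\,r^Q$ from Remark~\ref{dcor} yields
\[
\diam(u(B(x,r)))^Q \leq C\int_{B(x,\tau r)} (1 + g_u^p)\,d\mu,
\]
which is weak (RR) with $\Phi_u = C(1 + g_u^p) \in L^1\loc(X^Q)$.

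For Part~(I), pseudomonotonicity lets me replace $\diam(u(B(x,r)))$ by $C_M\,\diam(u(\partial B(x,r)))$, so it suffices to prove
\[
\diam(u(\partial B(x,r)))^Q \leq C\int_{B(x,\alpha r)} g_u^Q\,d\mu
\]
for some universal $\alpha \geq 1$. I would approximate the supremum defining the diameter by pairs $y_1,y_2 \in \partial B(x,r)$; by continuity of $u$, small preimages of open balls around $u(y_i)$ produce continua on $\partial B(x,r)$, and the function $g_u/d_{\V}(u(y_1),u(y_2))$ is admissible for the $Q$-modulus of the curve family in $B(x,\alpha r)$ joining these continua. The $(1,Q)$-Poincar\'e inequality together with doubling supplies a uniform positive lower bound on this $Q$-modulus (a quantitative $Q$-Loewner-type estimate), and the upper gradient inequality then yields the desired bound on $d_{\V}(u(y_1),u(y_2))^Q$. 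This gives (RR) with $\Phi_u = C\,g_u^Q \in L^1\loc(X^Q)$.

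The main obstacle is the $Q$-modulus lower bound in Part~(I): applied naively to two single points, the $Q$-modulus is zero, so one must thicken to continua on the sphere and use the continuity of $u$ to control the error as these continua shrink. The Banach-valued range $\V$ introduces no essential new difficulty, since $g_u$ is real-valued and all capacity and modulus estimates are statements about the domain $X^Q$; the vector-valued upper gradient inequality \eqref{ug-cond} supplies what is needed to pass to the range $\V$.
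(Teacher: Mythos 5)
Your Part (II) is correct and is essentially the paper's own argument: the Morrey-type estimate of Haj\l asz--Koskela for $p>Q$, Young's inequality with exponents $p/Q$ and $p/(p-Q)$, and the local lower mass bound of Remark~\ref{dcor} give weak (RR) with $\Phi_u=C(\tilde C^{-1}+g_u^p)$. (Dropping the H\"older factor $d(y_1,y_2)^{1-Q/p}$ is harmless, since only the diameter bound is needed.)

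Part (I) has a genuine gap. Your argument hinges on a uniform positive lower bound for the $Q$-modulus of the curves in $B(x,\alpha r)$ joining the two continua $E\ni y_1$ and $F\ni y_2$ obtained from preimages of small balls about $u(y_1)$ and $u(y_2)$. Such a Loewner-type bound (i) is not available under the paper's standing hypotheses, which include only doubling and the \emph{local lower} mass bound of Remark~\ref{dcor}, not two-sided Ahlfors $Q$-regularity; and (ii) even in a genuine $Q$-Loewner space it degenerates as $\min\{\diam E,\diam F\}/\dist(E,F)\to 0$, i.e.\ the modulus of curves joining two continua tends to zero as the continua shrink relative to their separation. Since $u$ is merely continuous, the components of $u^{-1}(B(u(y_i),\eps))$ containing $y_i$ can have diameter arbitrarily small compared with $r$, and nothing in your sketch rules this out: ``controlling the error as the continua shrink'' controls the oscillation of $u$ on $E$ and $F$, but not the collapse of the modulus lower bound itself, which is exactly where the argument fails. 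The paper's proof avoids this entirely: by Keith--Zhong the space supports a weak $(1,p)$-Poincar\'e inequality for some $p<Q$; the Haj\l asz--Koskela embedding is applied on a good sphere $\{d(\cdot,x)=r\}$, $r\in(r_x,2r_x)$, at this subcritical exponent; pseudomonotonicity transfers the oscillation bound from the sphere to the ball; the resulting average is majorized pointwise by the restricted maximal function $M_{\Om,12\tau r_x}g_u^p$; and the boundedness of the maximal operator on $L^{Q/p}$ with $Q/p>1$ then yields $\diam(u(B(x,r_x)))^Q\leq C\int_{B(x,r_x)}g_u^Q\,d\mu$. As written, your Part (I) does not close; you would need either to supply the missing modulus estimate for nondegenerate continua of diameter comparable to $r$ (which continuity alone does not provide) or to switch to a subcritical-exponent argument of the above type.
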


\begin{proof}
  Let $\Om \Subset X^Q$ be open, and fix
  $x\in\Om$.

  {\bf (I)}: Let $B(x,r_x)$, $0<r_x<\min\{r_D,r_M\}$, be a ball such
  that $B(x,12\tau r_x)\subset \Om$; $\tau\geq 1$ is the dilatation
  constant appearing in the Poincar\'e inequality. By a Sobolev
  embedding theorem Haj\l asz--Koskela~\cite[Theorem 7.1]{KoHa} there
  exists a constant $C$, depending on $C_\mu$ and the constants in the
  weak $(1,Q)$-Poincar\'e inequality, and a radius $r_x < r< 2r_x$
  such that
 \begin{equation} \label{Sobo}
 \|u(z)-u(y)\|_\V^p \leq Cd(z,y)^{p/Q}r_x^{p(1-1/Q)}\avint_{B(x,5\tau
   r_x)}g_u^p\, d\mu
\end{equation}
for each $z,y\in \Omega$ with $d(y,x)=r=d(z,x)$, where $p\in
(Q-\eps_0,Q)$. In fact, \cite[Theorem 7.1]{KoHa} is stated and proved
only for real-valued functions, but the argument is valid also when
the target is a Banach space as we may make use of the Lebesgue
differentation theorem for Banach space valued maps as in
\cite[Proposition 2.10]{HKST}. Since $u$ is pseudomonotone we obtain
from \eqref{Sobo}
\[
\diam(u(B(x,r_x)))^p \leq C_M^p\diam u(\partial
  B(x,r)))^p \leq Cr_x^p\avint_{B(x,5\tau r_x)} g_u^p\, d\mu,
\]
where $C$ depends on $C_\mu$, $C_M$, and the constants in the weak
$(1,Q)$-Poincar\'e inequality. For each $y\in B(x,r_x)$ we have
\[
\avint_{B(x,5\tau r_x)} g_u^p\, d\mu \leq \avint_{B(y,10\tau r_x)}
g_u^p\, d\mu \leq \mathcal{M}_{\Om,12\tau r_x}g_u^p(y).
\]
Compining the preceding two estimates and
integrating over $y\in B(x,r_x)$ we get
\[
\diam(u(B(x,r_x)))^p \leq Cr_x^p\avint_{B(x,r_x)}\mathcal{M}_{\Om,12\tau r_x}
g_u^p\, d\mu.
\]
Recall that $Q-\eps_0< p < Q$; we get
\begin{align*}
  \diam(u(B(x,r_x)))^p & \leq Cr_x^p\mu(B(x,r_x))^{-p/Q} \\
  & \qquad \left(\int_{B(x,r_x)}\left(\mathcal{M}_{\Om,12\tau r_x}
      g_u^p\right)^{Q/p}\,
    d\mu\right)^{p/Q} \\
  & \leq Cr_x^p\mu(B(x,r_x))^{-p/Q}\left(\int_{B(x,r_x)}g_u^Q\,
    d\mu\right)^{p/Q},
\end{align*}
which implies together with \eqref{localmassest} that
\[
\diam(u(B(x,r_x)))^Q \leq C\tilde C\int_{B(x,r_x)}g_u^Q\, d\mu,
\]
where $C$ depends on $C_\mu$, $C_M$, and the constants in the weak
$(1,Q)$-Poincar\'e inequality, and $\tilde C$ is from
\eqref{localmassest}. As $g_u^Q\in L\loc^1(X)$ this verifies the fact
that $u$ satisfies condition (RR), and thus is locally $Q$-absolutely
continuous.

{\bf (II)}: Let $B(x,r_x)$, $0<r_x< r_D$, be a ball such that
$B(x,5\tau r_x)\subset \Om$. Theorem 5.1 (3) in Haj\l
asz--Koskela~\cite[Theorem 5.1]{KoHa} implies that there exist a
constant $C$, depending on $C_\mu$, $p$, and the constants appearing
in the weak $(1,p)$-Poincar\'e inequality, such that
 \[
 \|u(z)-u(y)\|_\V \leq Cd(z,y)^{1-Q/p}r_x^{Q/p}\left(\avint_{B(x,5\tau
   r_x)}g_u^p\, d\mu\right)^{1/p}
\]
for all $z,y\in B(x,r_x)$. In fact, \cite[Theorem 5.1]{KoHa} is stated
and proved only for real-valued functions, but the argument is valied
also when the target is a Banach space. Young's inequality $ab\leq
a^p/p + b^{p'}/p'$ and \eqref{localmassest} imply
\begin{align*}
  \diam(u(B(x,r_x)))^Q & \leq
  \frac{Cr_x^Q}{\mu(B(x,r_x))^{Q/p}}\left(\int_{B(x,5\tau
      r_x)}g_u^p\, d\mu\right)^{Q/p} \\
  & \leq C\left(\tilde C^{-1} \mu(B(x,r_x))+\int_{B(x,5\tau
      r_x)}g_u^p\, d\mu\right) \\
  & \leq C\left(\int_{B(x,\alpha r_x)}\left(\tilde C^{-1} +
      g_u^p\right)\, d\mu\right).
\end{align*}
Hence $u$ satisfies condition (RR) weakly with $\alpha =5\tau$ and
with $\Phi_u=C(\tilde C^{-1}+g_u^p)$, $\tilde C$ is from
\eqref{localmassest}.
\end{proof}

\medskip

The fact that a continuous pseudomonotone function $u\in
N\loc^{1,Q}(X^Q;\V)$ verifies Luzin's condition ($N_Q$) would easily
follow also from Proposition~\ref{AC} (I).

\end{document}